\newtheorem{theorem}{Theorem}
\newtheorem{lemma}{Lemma}
\newtheorem{remark}{Remark}
\title{A Local Mesh Modification Strategy for Interface Problems with Application to Shape and Topology Optimization}
\author[1,2]{ P. Gangl \thanks{peter.gangl@dk-compmath.jku.at}}
\author[3]{U. Langer \thanks{ulanger@numa.uni-linz.ac.at}}
\affil[1]{Doctoral Program ``Comp. Mathematics'', JKU Linz, Austria}
\affil[2]{Linz Center of Mechatronics GmbH (LCM), Linz, Austria\\A}
\affil[3]{Institute of Computational Mathematics, JKU Linz, Austria\\A}
\begin{document}

\maketitle

\begin{abstract}
We present and analyze a new finite element method for solving interface problems on a triangular grid. The method locally modifies a given triangulation such that the interfaces are accurately resolved and the maximal angle condition holds. Therefore, optimal order of convergence can be shown. Moreover, an appropriate scaling of the basis functions yields an optimal condition number of the stiffness matrix. The method is applied to an optimal design problem for an electric motor where the interface between different materials is evolving in the course of the optimization procedure.
\end{abstract}
\section{Motivation}\label{gangl:sec_motivation}
Our research is motivated by the design optimization of an electric motor by means of topology and shape optimization.
We are interested in finding the optimal distribution of two materials (usually ferromagnetic material and air) within a fixed design subdomain of an electric motor, see, e.g. \cite{GanglLangerLaurainMeftahiSturm2015}. 
We use a two-dimensional model for the electric motor, 
which is widely used for this kind of applications.
In the optimization procedure, one usually starts with an initial guess and then uses shape sensitivities or topological sensitivities
to gradually improve the initial design. In the course of this optimization procedure, the interface between the two subdomains evolves. For computing the sensitivities that steer the optimization process, it is necessary to solve the state equation and the adjoint equation in each optimization iteration, which is usually done by the finite element method. 
Besides remeshing in every iteration, which is very costly, and advecting the whole mesh in every step of the optimization procedure, which may cause self-intersection of the mesh, there exist several other methods in the literature which can deal with these kinds of interface problems.
We mention the XFEM, 
which uses local enrichment of the finite element basis, and the unfitted Nitsche method.
In \cite{FreiRichter2014}, the authors introduce a locally modified parametric finite element method based on a quadrilateral mesh with a patch structure. We present an adaptation of this method to the case of finite elements on triangular meshes. 
One advantage of this kind of method over the ones mentioned before is that this method has a fixed number of unknowns independently of the position of the interface relative to the mesh. The given mesh is modified only locally near the material interface. The method is relatively easy to implement and we can show optimal order of convergence.
\section{A local mesh modification strategy for Interface Problems}\label{gangl:sec_method}
We introduce the method for the 
potential
equation 
in
a bounded, polygonal computational domain $\Omega \subset \mathbb R^2$ consisting of two non-overlapping subdomains, $\overline \Omega = \overline \Omega_1 \cup \overline \Omega_2$, $\Omega_1 \cap \Omega_2 = \emptyset$, which represent two materials with different material coefficients $\kappa_1, \kappa_2>0$. On the material interface $\Gamma := \overline \Omega_1 \cap \overline \Omega_2$, 
we have to require that the solution as well as the flux are continuous.
For simplicity,
we assume homogeneous Dirichlet boundary conditions on $\partial \Omega$. 
The problem reads as follows:
\begin{align}	\label{gangl:interfaceProblem}
	\begin{aligned}
		-\mbox{div }\left(\kappa_i \nabla u\right) &= f \quad \mbox{ in } \Omega_i, \;\; i =1,2, \\
		\left[u \right] &= 0 \quad \mbox{ on } \Gamma, \\
		\left[\kappa \frac{\partial u}{\partial n}\right] &= 0 \quad \mbox{ on } \Gamma, \\		
		u &= 0 \quad \mbox{ on } \partial \Omega,
	\end{aligned}	
\end{align}
where we assume that the boundaries of the two subdomains as well as the right hand side $f$ are sufficiently regular such that
$
	u \in H_0^1(\Omega)\cap H^2(\Omega_1\cup\Omega_2), 
$
that means that the restrictions of $u \in H_0^1(\Omega)$ to $\Omega_1$ and $\Omega_2$ 
belong to $H^2(\Omega_1)$ and $H^2(\Omega_2)$, respectively, see, e.g., \cite{Babuska1970}. It is well-known that, when using standard finite element methods, the interface must be resolved by the mesh in order to obtain optimal convergence rates of the approximate solution $u_h$ to the true solution $u$ in the $L^2$ and $H^1$ norms as the mesh parameter $h$ 
tends
to zero, see also \cite{FreiRichter2014}.
The discretization error estimate is usually shown using an interpolation error estimate.
A condition that is sufficient and necessary for such an interpolation error estimate is that all interior angles of triangles of the mesh are bounded away from 180$^\circ$ (maximum angle condition), see \cite{Babuska1976}.
\subsection{Preliminaries}
Let $\mathcal T_h$ be a shape-regular and quasi-uniform subdivision of $\Omega$ into triangular elements,
and let us denote 
the space of globally continuous, piecewise linear functions on $\mathcal T_h$ 
by $V_h$ . 
We assume that $\mathcal T_h$ has been obtained by one uniform refinement of a coarser mesh $\mathcal T_{2h}$. By this assumption, $\mathcal T_h$ has a patch-hierarchy, i.e., always four elements $T_1$, $T_2$, $T_3$, $T_4 \in \mathcal T_h$ can be combined to one larger triangle $T \in \mathcal T_{2h}$. 
We will refer to this larger element as the makro element or patch.
 We assume further that the mesh of makro elements $\mathcal T_{2h}$ is such that, for each makro element $T$, the interface $\Gamma$ either does not intersect the interior of $T$, or such that $\Gamma$ intersects $T$ in exactly two distinct edges or that it intersects $T$ in one vertex and in the opposite edge. For a smooth enough interface $\Gamma$, this assumption can 
always be enforced by choosing a fine enough makro mesh $\mathcal T_{2h}$.
We consider a makro element $T\in\mathcal T_{2h}$ to be cut by the interface if the intersection of the interior of makro element with interface is not the empty set.
\subsection{Description of the method}
The method presented in this paper is a local mesh adaptation strategy, meaning that only makro elements close to the interface $\Gamma$ will be modified. 
Given the hierarchic structure of the mesh, on every makro element we have four elements of the mesh $\mathcal T_h$ and six vertices, see Fig. \ref{gangl:fig_patch}(a),(b).
The idea of the method is the following: For each makro element that is cut by the interface, move the points $P_4$, $P_5$ and $P_6$ along the corresponding edges in such a way that, on the one hand, the interface is resolved accurately, and, on the other hand, all interior angles in the four triangles are bounded away from 180$^\circ$. 
For a makro element $T$ that is cut by the interface, we distinguish four different configurations as follows:

In the case where the makro element is cut by the interface in two distinct edges, we denote the vertex of the makro element where these two edges meet by $P_1$, and the other two vertices in counter-clockwise order by $P_2$ and $P_3$. The parameters $s$, $t$, $r \in [0,1]$ represent the positions of the points $P_4$, $P_5$, $P_6$ along the corresponding edges by
\begin{align*}
	P_4(s) = P_1 + s\, \frac{P_2 - P_1}{|P_2-P_1|}, \quad
	P_5(t) = P_2 + t\, \frac{P_3 - P_2}{|P_3-P_2|}, \quad
	P_6(r) = P_1 + r\, \frac{P_3 - P_1}{|P_3-P_1|}. 
\end{align*}
The parameters $r$ and $s$ will always be chosen in such a way that the intersection points of the interface and the edges $P_1P_3$ and $P_1P_2$ are the points $P_6$ and $P_4$, respectively. Thus, we identify the position of the interface relative to the makro element $T$ by the two parameters $r,s$. 
We choose the parameter $t$ such that a maximal angle condition is satisfied as follows:\\
\textbf{Configuration A:} $0<r, s\leq1/2$. Set $t=1/2$.\\
\textbf{Configuration B:} $1/2 <r, s<1$. Set $t=1-s$.\\
\textbf{Configuration C:} $0 < s \leq 1/2<r<1$ or $0 < r \leq 1/2<s<1$. Set $t=1/2$.

The case where the makro element is cut in one vertex and the opposite edge has to be considered separately. We denote the vertex of the makro element where it is cut by the interface by $P_2$ and the other vertices, in counter-clockwise ordering, by $P_3$ and $P_1$, see Fig. \ref{gangl:fig_patch}(b). The location of the interface is given by the position of the point $P_6$ on the edge between $P_3$ and $P_1$. In this case, we also need to rearrange the triangles $T_2$ and $T_4$.\\
\textbf{Configuration D:} \\
\textit{Configuration D1:} $ 0< r \leq 1/2$. Set $s = r$ and $t=1/2$.\\
\textit{Configuration D2:} $ 1/2 < r < 1$. Set $s = 1/2$ and $t=r$.

With this setting, it is possible to show the required maximal angle condition on the reference patch $\hat T$ defined by the outer makro vertices $\hat P_1 =(0,0)^T$, $\hat P_2 = (1,0)^T$, $\hat P_3=(1/2, \sqrt{3}/2)^T$. 
\begin{figure}
	\centering
		\begin{tabular}{ccc}
			\includegraphics[scale=0.35]{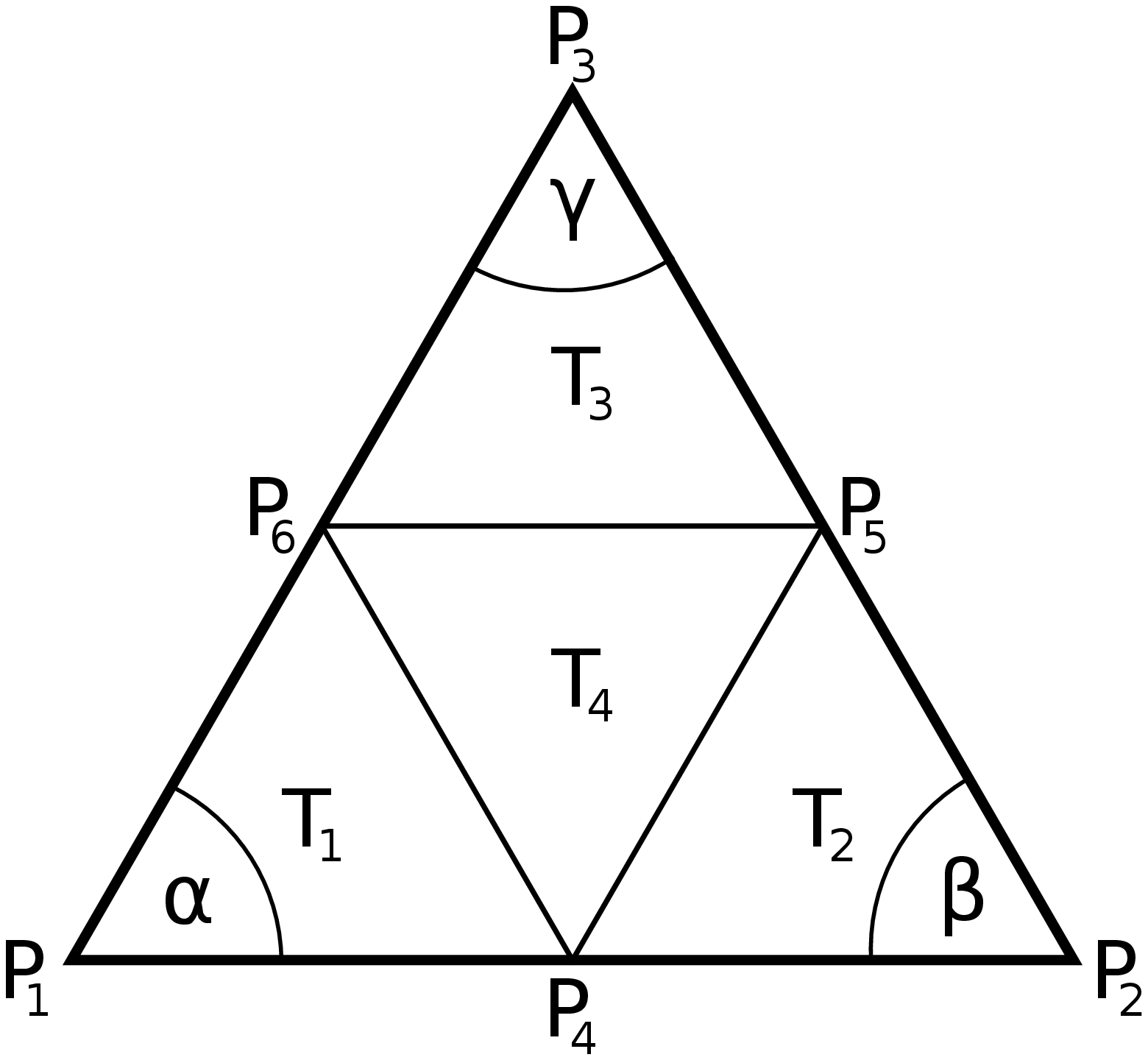} & \includegraphics[scale=0.35]{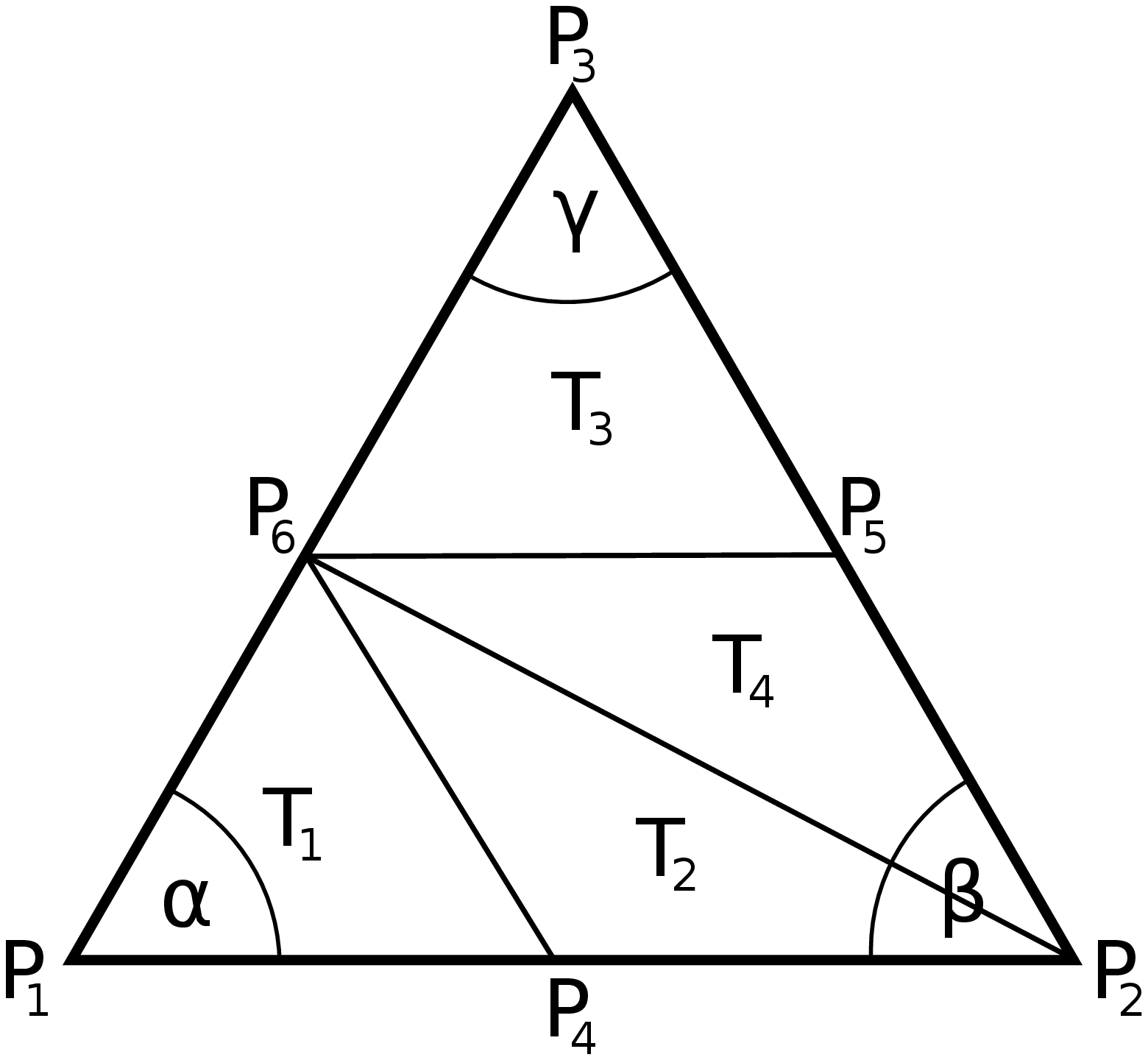} \\
			(a) Patch for configurations A--C & (b) Patch for configuration D \\
			 \includegraphics[scale=0.35]{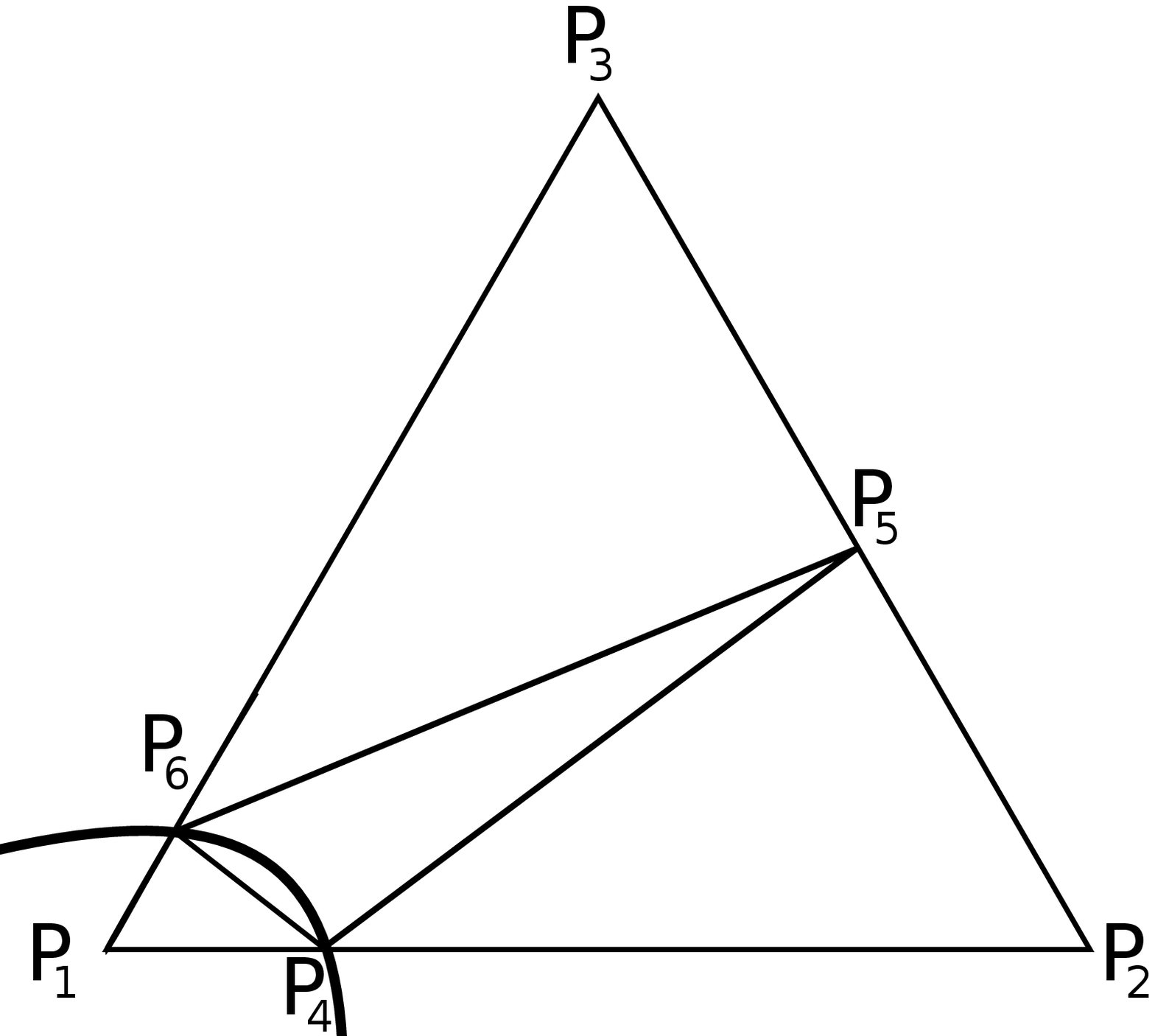} & \includegraphics[scale=0.35]{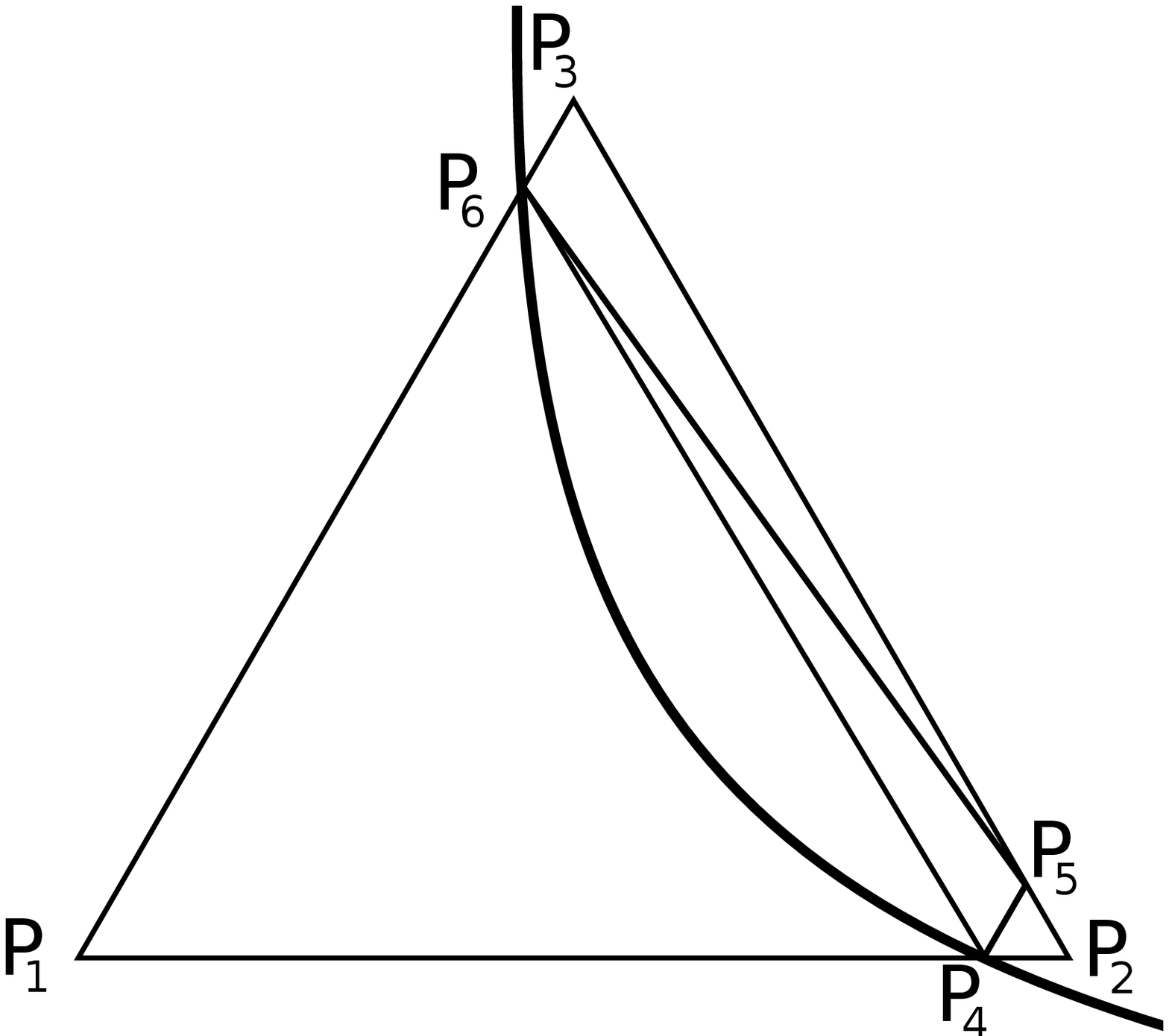}\\
			  (c) Configuration A & (d) Configuration B \\
			   \includegraphics[scale=0.35]{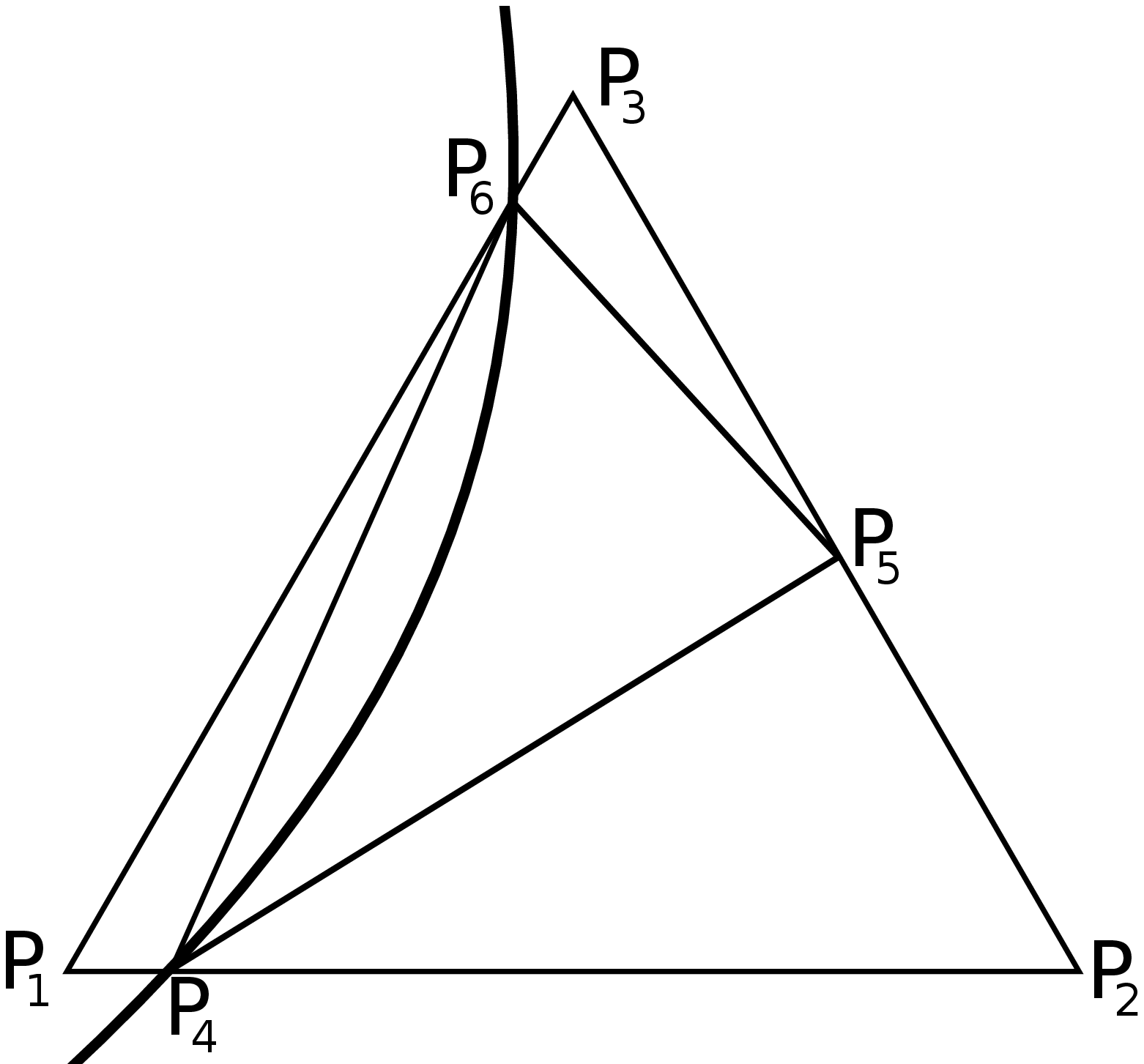} & \includegraphics[scale=0.35]{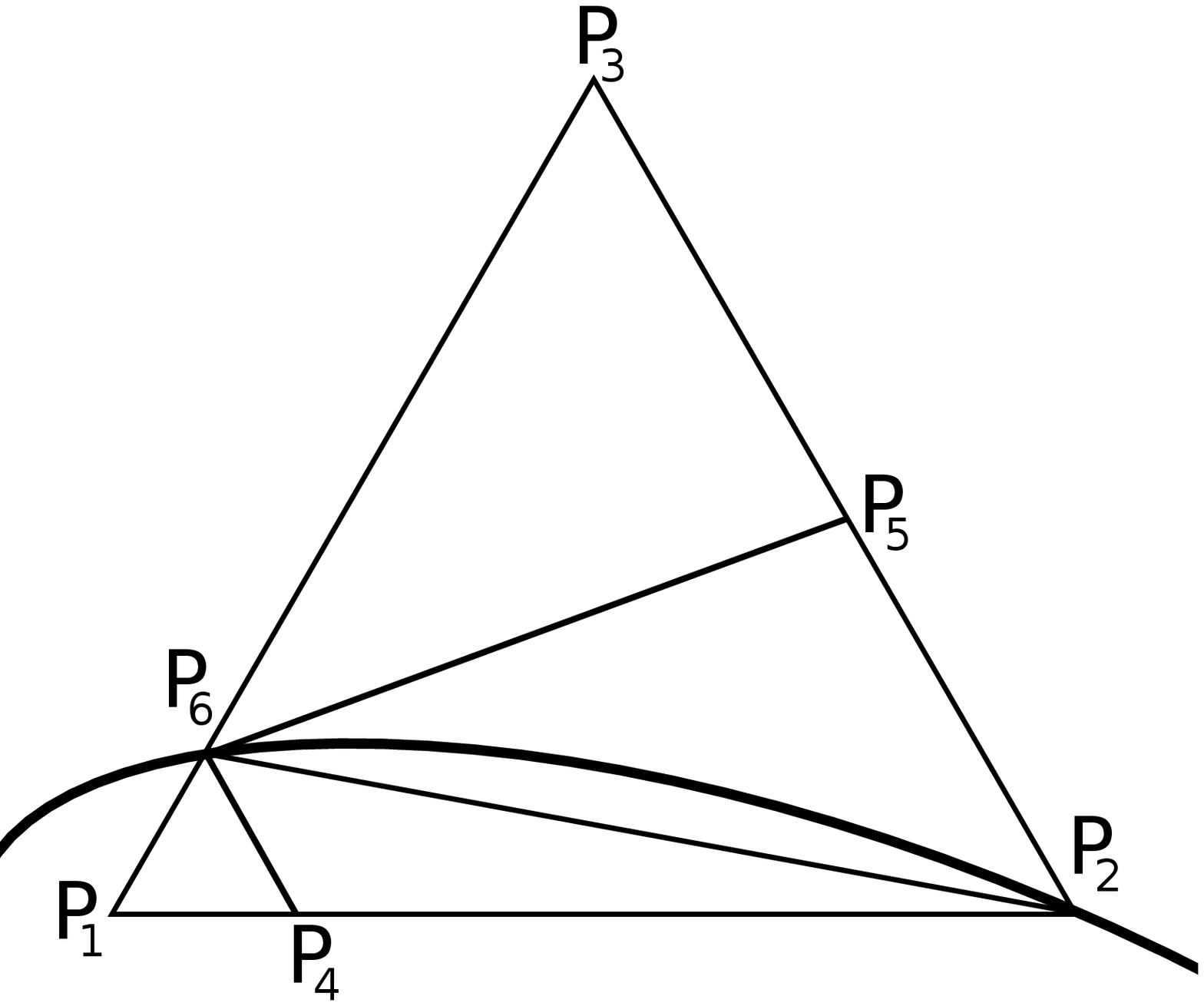} \\
			   (e) Configuration C & (f) Configuration D
			\end{tabular}
	\caption{(a), (b): Patches for different configurations. (c)-(f): Different configurations of mesh points depending on position of the interface.}
	\label{gangl:fig_patch}
\end{figure}
\begin{lemma} \label{gangl:lem_MaxAngle}
	All angles in triangles of the reference patch $\hat T$ are bounded by $150^\circ$ independent of the parameters $r,s \in [0,1]$.
\end{lemma}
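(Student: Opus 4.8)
My plan is to convert the geometric statement into a small family of explicit algebraic inequalities in $r,s$ and to verify them configuration by configuration, after first cutting down the number of triangles that actually require work. Placing the reference patch in coordinates and using that its three outer edges have unit length, the defining formulas give
\begin{equation*}
P_4(s)=(s,0),\qquad P_6(r)=\Bigl(\tfrac r2,\tfrac{r\sqrt3}{2}\Bigr),\qquad P_5(t)=\Bigl(1-\tfrac t2,\tfrac{t\sqrt3}{2}\Bigr),
\end{equation*}
and $\hat T$ is split into the three corner triangles $\hat P_1P_4P_6$, $P_4\hat P_2P_5$, $P_6P_5\hat P_3$ and the central triangle $P_4P_5P_6$ (in Configuration D the quadrilateral $\hat P_2P_5P_6P_4$ is instead cut along the diagonal $\hat P_2P_6$ that carries the interface, giving $P_4\hat P_2P_6$ and $\hat P_2P_5P_6$). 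The key simplification is that each corner triangle has a vertex coinciding with a vertex of the \emph{equilateral} patch, with its two incident edges lying along the two patch edges through that vertex; hence that interior angle equals $60^\circ$ for all admissible $r,s,t$. The remaining two angles of each corner triangle therefore sum to $120^\circ$, so no corner angle can exceed $120^\circ<150^\circ$ and these triangles need no further attention in Configurations A--C. This reduces the problem to the central triangle $P_4P_5P_6$ in Configurations A--C, and to the two diagonal triangles $P_4\hat P_2P_6$, $\hat P_2P_5P_6$ in Configuration D.

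Next I would reformulate the bound algebraically. For a triangle $A,B,C$ the interior angle $\theta$ at $A$ satisfies $\theta\le150^\circ$ iff $\cos\theta\ge-\tfrac{\sqrt3}{2}$, i.e.
\begin{equation*}
(B-A)\cdot(C-A)\ \ge\ -\tfrac{\sqrt3}{2}\,|B-A|\,|C-A|.
\end{equation*}
Since the right-hand side is the only place where a square root enters, this can fail only when the dot product is negative, and then it is equivalent to the polynomial inequality $\bigl((B-A)\cdot(C-A)\bigr)^2\le\tfrac34\,|B-A|^2|C-A|^2$. Thus it suffices to verify, for each relevant triangle and each of its three vertices, one polynomial inequality in $r,s$ (with $t$ replaced by the configuration value $1/2$, $1-s$, or $r$) on the appropriate parameter rectangle. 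I would also use the reflection of $\hat T$ across the median through $\hat P_1$, which exchanges $\hat P_2\leftrightarrow\hat P_3$ and acts on parameters by $(r,s,t)\mapsto(s,r,1-t)$: this fixes Configuration A (whose central triangle then need only be treated for $r\le s$) and maps the two sub-cases of Configuration C to one another.

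The main obstacle is the analysis of the central triangle $P_4P_5P_6$ and, in Configuration D, of the two diagonal triangles, whose three angles are genuine functions of two parameters; after substituting $t$ each inequality is a polynomial in $r,s$ that I would certify either as a sum of squares or by sign analysis of its gradient on the closed rectangle, reducing to a one-variable estimate along the worst direction when convenient. I expect the constant $150^\circ$ to be sharp and attained only in a degenerate limit: in Configuration A (with $t=1/2$, so $P_5=(\tfrac34,\tfrac{\sqrt3}{4})$) letting $r,s\to0$ with $r/s\to0$ sends $P_6\to\hat P_1$ and drives the angle of $P_4P_5P_6$ at $P_4$ to $180^\circ-30^\circ=150^\circ$. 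The delicate part is confirming that this is the extreme configuration, that the symmetric limits and the boundaries of Configurations B and D produce nothing larger, and that the inequality is strict on the open parameter range; once this is established, the corner-angle reduction and the polynomial reformulation above deliver the uniform $150^\circ$ bound.
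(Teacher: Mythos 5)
Your setup is sound and your preliminary reductions are exactly the ones the paper uses: the observation that each corner triangle has a $60^\circ$ angle at the patch vertex (so its other two angles sum to $120^\circ$), the correct rearranged decomposition in Configuration D (cutting the quadrilateral $P_4\hat P_2P_5P_6$ along the interface diagonal $\hat P_2P_6$), the reflection symmetry $(r,s,t)\mapsto(s,r,1-t)$, and the identification of the sharp limit in Configuration A (angle at $P_4$ tending to $150^\circ$ as $r/s\to 0$) are all correct. But after these reductions you stop at precisely the point where the lemma's content lies: the $150^\circ$ bound for the three angles of the central triangle in Configurations A--C and for the two diagonal triangles in Configuration D is never established. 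You reformulate each bound as a polynomial inequality in $(r,s)$ and announce that you \emph{would} certify it ``either as a sum of squares or by sign analysis of its gradient,'' and you state that the ``delicate part is confirming'' that the Configuration A limit is the global extremum --- but no certificate, no monotonicity argument, and no case analysis is actually produced. As it stands, nothing in your text rules out an angle exceeding $150^\circ$ somewhere in Configuration C or D; the extremality of your candidate limit is an expectation, not a proof. That is a genuine gap, not a presentational one, because the squared-dot-product inequalities are quartic in two variables and their verification is not routine enough to be waved through.

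For comparison, the paper closes exactly this part with short synthetic arguments that avoid polynomials altogether, and you could adopt them to complete your plan. In Configuration A one compares against angles of the undeformed patch: $\measuredangle(P_6,P_4,P_5) < \measuredangle(P_1,P_4,P_5) = 180^\circ - \measuredangle(P_5,P_4,P_2) \leq 180^\circ - \measuredangle(P_5,P_1,P_2) = 180^\circ - 30^\circ$, with the angle at $P_5$ maximized at $r=s=1/2$ where the four subtriangles are congruent. In Configurations B and D the specific parameter couplings ($t=1-s$, resp.\ $s=r$) force $P_4P_5 \parallel P_1P_3$, resp.\ $P_4P_6 \parallel P_2P_3$, which immediately pins one angle at $120^\circ$ and bounds the rest; in Configuration C the fixed midpoint $P_5(1/2)$ allows monotone comparisons as in A. These observations replace your entire polynomial-certification step with a few lines each; alternatively, your algebraic route is viable in principle, but you must actually exhibit the SOS decompositions or carry out the boundary/critical-point analysis on each parameter rectangle before the lemma can be considered proved.
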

\begin{proof}
	We have to ensure for each of the four subtriangles $\hat T_1$, $\hat T_2$, $\hat T_3$, $\hat T_4$ that all of their three interior angles are not larger than 150$^\circ$. 
	In Configuration A--C, the sub-triangles $\hat T_1$, $\hat T_2$ and $\hat T_3$ all have one angle of 60$^\circ$. Obviously, the remaining two angles are bounded from above by 120$^\circ$. The same holds true for the sub-triangles $\hat T_1$ and $\hat T_3$ in Configuration D.
	
	For three points $A$, $B$, $C$ in $\mathbb R^2$, define
	\begin{align*}
		\measuredangle(A,B,C) := \mbox{cos}^{-1}\left( \frac{\left( A-B, C-B \right)}{|A-B|\, |C-B|} \right)
	\end{align*}
	the interior angle of the triangle with vertices $A$, $B$, $C$ at point $B$.
	
	\textit{Configuration A:} For $r,s \in (0, 1/2]$, we get for the angle in point $P_4$ that
	\begin{align*}
		\measuredangle(P_6,P_4,P_5) < \measuredangle(P_1,P_4,P_5) = 180^\circ - \measuredangle(P_5,P_4,P_2) \leq  180^\circ - \measuredangle(P_5,P_1,P_2).
	\end{align*}
	Since the reference patch $\hat T$ is equilateral, it holds $\measuredangle(P_5,P_1,P_2) = \alpha/2$.
	Analogously, we get for the angle in point $P_6$ that $\measuredangle(P_5,P_6,P_4) < 180^\circ - \alpha /2$.
	It is easy to see that the angle in point $P_5$ increases with $r,s$ and thus is maximized for $r=s= 1/2$, which yields that 
	$
		\measuredangle(P_4,P_5,P_6) \leq \measuredangle(P_4(1/2), P_5, P_6(1/2) ) = 180- \beta - \gamma = \alpha.
	$
	Here we used that, for $r=s=t=1/2$, the four sub-triangles are congruent.
	
	\textit{Configuration B:}
	Note that, by the special choice of $s, t$, in this case we have that the line going through $P_4$ and $P_5$ is parallel to the edge connecting $P_1$ and $P_3$ for all values of $s\in(1/2,1)$. Thus, we have
	\begin{align*}
		\measuredangle(P_4,P_5,P_6) &\leq \measuredangle(P_4,P_5,P_3) = 180^\circ - \gamma \quad \mbox{ and } \\
		\measuredangle(P_4,P_5,P_6) &= 180^\circ - \gamma - \measuredangle(P_6, P_5, P_3) \\
			&\geq 180^\circ - \gamma - \measuredangle( P_6(1/2), P_5(1/2), P_3) = 180^\circ - \gamma - \beta = \alpha .
	\end{align*}
	The angles in $P_4$ and in $P_6$ must also be bounded from above by $180^\circ - \alpha = 120^ \circ$.
	
	\textit{Configuration C:} We consider the case where $r \in (1/2,1)$ and $s\in (0,1/2]$. The reverse case is treated analogously. For the angle in the fixed point $P_5 = P_5(1/2) = (P_2+P_3)/2$, we get the estimates
	\begin{align*}
		\measuredangle(P_4,P_5,P_6) &\leq \measuredangle(P_4,P_5,P_3) \leq \measuredangle( (P_4(1/2),P_5,P_3) = 180^\circ - \gamma,\\
		\measuredangle(P_4,P_5,P_6) &\geq \measuredangle(P_4,P_5,P_6(1/2) ) \geq \measuredangle( P_1,P_5, P_6(1/2)) = \measuredangle( P_5,P_1, P_2) = \alpha/2.
	\end{align*}
	Thus, the angles $\measuredangle(P_6, P_4,P_5)$ and $\measuredangle(P_5,P_6,P_4)$ are also bounded from above by $180^\circ -\beta/2$.

	\textit{Configuration D:} We consider only Configuration D1, the corresponding result for Configuration D2 follows analogously.
	Due to the choice of the parameter $s$, the line going through $P_4$ and $P_6$ is parallel to the edge connecting $P_2$ and $P_3$ for all values of $r$. We need to consider triangles $T_2$ and $T_4$. In $T_2$, 
	$
		\measuredangle(P_6, P_4, P_2) = 180^\circ - \beta
	$
	and, therefore, the other two angles are bounded by $\beta$. In $T_4$, we have for $r \in (0,1/2]$ that
	\begin{align*}
		\measuredangle(P_6, P_2, P_5) &\leq \beta,\\
		\measuredangle(P_2, P_5, P_6) &\leq \measuredangle(P_2, P_5, (P_3+P_1)/2) = 180 - \beta,\\
		\measuredangle(P_5, P_6, P_2) &\leq \measuredangle(P_3, P_6, P_4) = 180 - \gamma.
	\end{align*}
	Finally, noting that $\alpha = \beta = \gamma = 60^\circ$ yields the statement of the lemma.
\end{proof}
\begin{remark}
	Due to the assumption that the makro mesh is shape-regular, we 
	obtain a maximal angle condition (with a different bound) for all triangles of the mesh $\mathcal T_h$.
\end{remark}
Now we are in the position to show an a priori error estimate for the finite element solution $u_h$.
Since we have the maximum angle condition of Lemma \ref{gangl:lem_MaxAngle}, 
we get the interpolation error estimates
\begin{align}	\label{gangl:interpolEst}
	\| \nabla^k(v - I_h v) \|_{L^2(T)} \leq c\, h^{2-k}_{T,max} \| \nabla^2 v \|_T, \quad k=0,1,
\end{align}
where 
$I_h: H^2(T) \rightarrow V_h|_{\overline T}$ denotes the Lagrangian interpolation operator,
$c$ is a positive generic constant,
and $h_{T,max}$ is the maximum edge length of the triangle $T \in \mathcal T_h$, see, e.g., \cite{Apel1999}.
In the case where the interface $\Gamma$ is not polygonal but smooth with $C^2$ parametrization, and an element of the mesh $\mathcal T_h$ is intersected by $\Gamma$, the solution $u$ is not smooth across the interface and, hence, estimate \eqref{gangl:interpolEst} cannot be applied. However, the same estimate with $k=1$ was shown in \cite{Frei2016}. These interpolation error estimates allow to show the following a priori error estimate \cite{Frei2016}.
\begin{theorem} \label{gangl:theoErrEst}
	Let $\Omega \subset \mathbb R^2$ be a domain with convex polygonal boundary, split into $\Omega = \Omega_1 \cup \Gamma \cup \Omega_2$, where $\Gamma$ is a smooth interface with $C^2$-parametrization. We assume that $\Gamma$ divides $\Omega$ in such a way that the solution 
	$u$ belongs to 	 $H_0^1(\Omega)\cap H^2(\Omega_1 \cup \Omega_2)$ and
	satisfies the stability estimate 
	$\|u\|_{H^2(\Omega_1 \cup \Omega_2)} \leq c_s \|f\|$.	
	Then, for the corresponding modified finite element solution $u_h\in V_h$, we have
	the
	estimates
	\begin{align*}
		\| \nabla (u-u_h)\|_{L^2(\Omega)} \leq C\,  h \,  \|f\| \;\; \mbox{and} \;\; \|u-u_h\|_{L^2(\Omega)} \leq C \, h^2 \, \|f\|.
	\end{align*}
	\end{theorem}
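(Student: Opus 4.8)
My plan is to follow the classical pipeline for conforming finite element error analysis — Céa's lemma for the energy-norm estimate, followed by an Aubin--Nitsche duality argument for the $L^2$-estimate — the only non-standard ingredient being the treatment of the elements cut by the (possibly curved) interface. I first introduce the bilinear form
\begin{align*}
	a(u,v) = \sum_{i=1}^2 \int_{\Omega_i} \kappa_i\, \nabla u \cdot \nabla v \, dx,
\end{align*}
the weak form of \eqref{gangl:interfaceProblem}; since $\kappa_1,\kappa_2>0$ it is bounded and coercive on $H_0^1(\Omega)$, so existence, uniqueness and stability of both $u$ and $u_h$ are standard. Because the modified triangulation resolves the interface and $V_h \subset H_0^1(\Omega)$, the discretization is conforming and we have Galerkin orthogonality $a(u-u_h,v_h)=0$ for all $v_h\in V_h$. (When $\Gamma$ is curved the discrete interface is only a polygonal approximation; as in \cite{Frei2016} this geometric error is absorbed at the level of the interpolation estimate, and I would carry that convention throughout.)

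For the $H^1$-estimate I would apply Céa's lemma to reduce to the best-approximation error and bound the latter by the interpolation error of the Lagrange interpolant $I_h u$:
\begin{align*}
	\|\nabla(u-u_h)\|_{L^2(\Omega)} \le C \inf_{v_h\in V_h} \|\nabla(u-v_h)\|_{L^2(\Omega)} \le C\, \|\nabla(u-I_hu)\|_{L^2(\Omega)}.
\end{align*}
Here $I_h u$ is well defined because $u$ is continuous across $\Gamma$. I then split the sum over elements $T\in\mathcal T_h$ into those not meeting $\Gamma$, where $u|_T\in H^2(T)$ and \eqref{gangl:interpolEst} with $k=1$ applies directly, and the cut elements, where $u$ is only piecewise $H^2$ and I invoke the $k=1$ estimate of \cite{Frei2016} that holds precisely under the maximum angle condition of Lemma \ref{gangl:lem_MaxAngle}. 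Summing, and using quasi-uniformity so that $h_{T,max}\le C h$, yields $\|\nabla(u-u_h)\|_{L^2(\Omega)} \le C h\, \|\nabla^2 u\|_{L^2(\Omega_1\cup\Omega_2)}$, and the assumed stability estimate $\|u\|_{H^2(\Omega_1\cup\Omega_2)}\le c_s\|f\|$ gives the first claimed bound.

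For the $L^2$-estimate I would run Aubin--Nitsche duality. Let $w\in H_0^1(\Omega)$ solve the adjoint transmission problem with right-hand side $u-u_h$; since the operator is self-adjoint, $w$ solves the same type of interface problem and, by the stated elliptic regularity, $w\in H^2(\Omega_1\cup\Omega_2)$ with $\|w\|_{H^2(\Omega_1\cup\Omega_2)}\le c_s\|u-u_h\|_{L^2(\Omega)}$. Testing with $u-u_h$, using Galerkin orthogonality to subtract $I_h w$, and applying the $H^1$-bound already proved together with the interpolation estimate for $w$, I obtain
\begin{align*}
	\|u-u_h\|_{L^2(\Omega)}^2 = a(u-u_h,\, w-I_hw) \le C\,\|\nabla(u-u_h)\|_{L^2(\Omega)}\,\|\nabla(w-I_hw)\|_{L^2(\Omega)} \le C h\,\|f\|\cdot C h\,\|u-u_h\|_{L^2(\Omega)},
\end{align*}
and dividing by $\|u-u_h\|_{L^2(\Omega)}$ gives the $h^2$ estimate.

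The main obstacle is not the abstract machinery, which is routine, but the cut-element interpolation estimate: near a curved interface $u$ jumps in its second derivatives, the mesh resolves $\Gamma$ only approximately, and the modified elements can be anisotropic, so the standard shape-regularity-based estimate fails. The entire purpose of Lemma \ref{gangl:lem_MaxAngle} is to secure the maximum angle condition under which the sharpened estimate of \cite{Frei2016} holds on exactly these elements; the real work is to check that my element splitting matches the hypotheses of that result and that the geometric error from the polygonal approximation of $\Gamma$ is of the right order.
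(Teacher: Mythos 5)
Your proposal is correct and follows essentially the same route as the paper, which does not spell out a proof but defers to \cite{Frei2016}, whose argument is precisely this pipeline: C\'ea's lemma plus Aubin--Nitsche duality, with the only nonstandard ingredient being the interpolation estimate on cut elements, valid under the maximum angle condition of Lemma~\ref{gangl:lem_MaxAngle} and absorbing the $O(h^2)$ geometric error from the polygonal approximation of $\Gamma$. You correctly identify that the real work lies in that cut-element estimate rather than in the abstract machinery, which is exactly how the paper structures its presentation.
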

\section{Condition number}
The procedure of Section \ref{gangl:sec_method} guarantees that no angle of the modified mesh becomes too large. However, it may happen that some angles in the triangulation are getting arbitrarily close to zero, which usually yields a bad condition of the finite element system matrix. 
This problem was also addressed in \cite{FreiRichter2014} for the case of quadrilateral elements, and we can adapt the procedure to the triangular case.

The idea consists in a hierarchical splitting of the finite element space $V_h = V_{2h} + V_b$ into the standard piecewise linear finite element space on the makro mesh $\mathcal T_{2h}$ and the space of ``bubble'' functions in $V_b$ which vanish on the nodes of the makro elements. 
Let $\lbrace \phi_h^1, \dots, \phi_h^{N_h}\rbrace$ be the nodal basis of the space $V_h$.
Any function $v_h \in V_h$ can be decomposed into the sum of a function $v_{2h} \in V_{2h} = \mbox{span}\lbrace \phi_{2h}^1, \dots, \phi_{2h}^{N_{2h}} \rbrace$ and a function $v_b \in V_b=\lbrace \phi_{b}^1, \dots, \phi_{b}^{N_{b}} \rbrace$,
\begin{align*}
	v_h = \sum_{i=1}^{N_h} \mathbf v_h^i \phi_h^i = \sum_{i=1}^{N_{2h}} \mathbf v_{2h}^i \phi_{2h}^i + \sum_{i=1}^{N_b} \mathbf v_b^i \phi_b^i = v_{2h}+v_b \in V_{2h}+V_b.
\end{align*}
In this setting it is possible to scale the basis functions $\phi_b^i$ of the space $V_b$ in such a way that the following two conditions are satisfied:
\begin{itemize}
	\item There exists a constant $C>0$ independent of $h$, $r$, $s$ such that
	\begin{align}
		C^{-1} \leq \| \nabla \phi_h^i \| \leq C, \quad i=1,\dots,N_h \label{gangl:condition1},
	\end{align}
	\item There exists a constant $C>0$ independent of $h$, $r$, $s$, such that for all $v_b \in V_b$
	\begin{align}
		|\mathbf v_b^i| \leq C \| v_b \|_{\mathcal N_i}, \quad i = 1,\dots,N_b, \label{gangl:condition2}
	\end{align}
	where $\mathcal N_I = \lbrace K \in \mathcal T_h: x_i \in \overline K\rbrace$.
\end{itemize}
Under these two assumptions it is possible to show the usual 
bound on the condition number of the system matrix:
\begin{theorem}
	Assume that \eqref{gangl:condition1} and \eqref{gangl:condition2} hold. Then there exists a constant $C>0$ independent of $r$, $s$, such that
	$\mbox{cond}_2(\mathbf A) \leq C \, h^{-2}$.
\end{theorem}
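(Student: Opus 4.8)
The plan is to exploit that $\mathbf A$ is symmetric positive definite: it is the Gram matrix, in the (scaled) basis $\{\phi_h^i\}$, of the $H_0^1(\Omega)$-elliptic form $a(u,v)=\sum_{i=1,2}\int_{\Omega_i}\kappa_i\,\nabla u\cdot\nabla v\,dx$, so that $\mathrm{cond}_2(\mathbf A)=\lambda_{\max}(\mathbf A)/\lambda_{\min}(\mathbf A)$, and I would estimate the two extreme eigenvalues separately via the Rayleigh quotient. For a coefficient vector $\mathbf v=(\mathbf v_h^1,\dots,\mathbf v_h^{N_h})$ with associated function $v_h=\sum_i\mathbf v_h^i\phi_h^i$ one has $\mathbf v^T\mathbf A\mathbf v=a(v_h,v_h)$ and $\mathbf v^T\mathbf v=\|\mathbf v\|_2^2=\sum_i|\mathbf v_h^i|^2$, so the whole statement reduces to the two-sided norm equivalence $c\,h^2\|\mathbf v\|_2^2\le a(v_h,v_h)\le C\|\mathbf v\|_2^2$ with constants independent of $r,s$.

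For the upper eigenvalue I would argue directly from \eqref{gangl:condition1}. Since each element $K\in\mathcal T_h$ carries at most three non-vanishing basis functions, expanding $a(v_h,v_h)\le\max_i\kappa_i\,\|\nabla v_h\|^2$ elementwise and applying the triangle inequality on the finitely many overlapping supports gives $\|\nabla v_h\|^2\le 3\sum_i|\mathbf v_h^i|^2\,\|\nabla\phi_h^i\|^2\le C\|\mathbf v\|_2^2$, where the last step uses the upper bound in \eqref{gangl:condition1}; hence $\lambda_{\max}(\mathbf A)\le C$. This step is routine and needs only the bounded overlap of the supports together with $\max_i\kappa_i<\infty$.

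The lower eigenvalue is the crux. I would first reduce to an $L^2$-mass estimate: by the Friedrichs inequality (valid since $v_h\in H_0^1(\Omega)$, with a constant depending only on $\Omega$) and $\min_i\kappa_i>0$ one has $a(v_h,v_h)\ge\min_i\kappa_i\,\|\nabla v_h\|^2\ge c\,\|v_h\|_{L^2(\Omega)}^2$, so it suffices to prove the lower mass bound $\|v_h\|_{L^2(\Omega)}^2\ge c\,h^2\|\mathbf v\|_2^2$. Here I would invoke the hierarchical splitting $v_h=v_{2h}+v_b$ and bound the two groups of coefficients separately. For the macro coefficients $\mathbf v_{2h}$, which are nodal values of $v_h$ at the vertices of the shape-regular makro mesh $\mathcal T_{2h}$, the usual mass-matrix estimate gives $\|v_{2h}\|_{L^2}^2\ge c\,h^2\|\mathbf v_{2h}\|_2^2$. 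For the bubble coefficients $\mathbf v_b$ I would use \eqref{gangl:condition2}: summing $|\mathbf v_b^i|^2\le C\|v_b\|_{\mathcal N_i}^2$ over $i$ and using the bounded overlap of the patches $\mathcal N_i$ yields $\|\mathbf v_b\|_2^2\le C\|v_b\|_{L^2}^2$, i.e. $\|v_b\|_{L^2}^2\ge c\,\|\mathbf v_b\|_2^2\ge c\,h^2\|\mathbf v_b\|_2^2$ (using $h\le 1$). This is exactly the place where \eqref{gangl:condition2} replaces the shape-regularity argument that fails on the thin, interface-cut triangles.

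The remaining, and genuinely delicate, point is to combine the two contributions: since $\|\mathbf v\|_2^2=\|\mathbf v_{2h}\|_2^2+\|\mathbf v_b\|_2^2$ while $v_h=v_{2h}+v_b$, I must prevent cancellation between the macro and the bubble part. I expect this to be the main obstacle, and I would handle it by establishing the uniform stability of the two-level splitting — either an $L^2$-stability estimate $\|v_{2h}\|_{L^2}+\|v_b\|_{L^2}\le C\|v_h\|_{L^2}$, or, working in the energy norm from the outset, a strengthened Cauchy--Schwarz inequality $|a(v_{2h},v_b)|\le\gamma\,a(v_{2h},v_{2h})^{1/2}a(v_b,v_b)^{1/2}$ with $\gamma<1$ independent of $h,r,s$, which gives $a(v_h,v_h)\ge(1-\gamma)\big(a(v_{2h},v_{2h})+a(v_b,v_b)\big)$ and then, after Friedrichs and the two coefficient bounds above, $a(v_h,v_h)\ge c\,h^2\big(\|\mathbf v_{2h}\|_2^2+\|\mathbf v_b\|_2^2\big)=c\,h^2\|\mathbf v\|_2^2$. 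The uniformity of $\gamma$ (equivalently, of the splitting constant) with respect to the interface position $r,s$ is what must be checked with care, since the degenerating geometry could spoil it; once it is secured, combining $\lambda_{\max}(\mathbf A)\le C$ with $\lambda_{\min}(\mathbf A)\ge c\,h^2$ yields $\mathrm{cond}_2(\mathbf A)\le C\,h^{-2}$.
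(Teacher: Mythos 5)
Your overall architecture is the right one, and in fact it coincides with the proof the paper implicitly relies on: the paper states this theorem without proof, deferring to the adaptation of the argument of \cite{FreiRichter2014} (see also \cite{Frei2016}). Your reduction $\mathbf v^T\mathbf A\mathbf v=a(v_h,v_h)$, the bound $\lambda_{\max}(\mathbf A)\le C$ from the upper bound in \eqref{gangl:condition1} together with bounded overlap of supports (which indeed holds uniformly in $r,s$, since the modification moves nodes but never changes the mesh topology; note only that on each sub-triangle up to six hierarchical basis functions are nonzero, three macro hats and three bubbles, not three --- harmless), the Friedrichs reduction of $\lambda_{\min}$ to a lower mass bound, the macro-coefficient estimate $\|v_{2h}\|_{L^2}^2\ge c\,h^2\|\mathbf v_{2h}\|_2^2$ on the shape-regular mesh $\mathcal T_{2h}$, and the bubble-coefficient estimate $\|\mathbf v_b\|_2^2\le C\|v_b\|_{L^2}^2$ from \eqref{gangl:condition2} are all correct and are exactly the skeleton of the reference argument.

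However, at the step you yourself call the crux you only name the missing lemma; that is a genuine gap, not a routine verification. Everything hinges on the uniform stability of the hierarchical splitting --- concretely on a patchwise estimate such as
\begin{align*}
\|v_b\|_{L^2(T)}\le c\,h\,\|\nabla v_h\|_{L^2(T)}\quad\text{for all }v_h\in V_h,\ T\in\mathcal T_{2h},
\end{align*}
with $c$ independent of $r,s$ (from which $\|v_{2h}\|\le c\|v_h\|$ follows by the triangle inequality and Friedrichs), or equivalently on a CBS constant $\gamma<1$ uniform in $r,s$. Neither follows from \eqref{gangl:condition1}--\eqref{gangl:condition2} alone: condition \eqref{gangl:condition2} controls the bubble coefficients \emph{by} $\|v_b\|$, but provides no control of $\|v_b\|$, or of the macro nodal values $\mathbf v_{2h}^i=v_h(x_i)$, in terms of $v_h$ itself. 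The naive route --- an inverse estimate $|v_h(x_i)|\le c\,h^{-1}\|v_h\|_{L^2(\mathcal N_i)}$ at macro vertices --- fails uniformly, because the sub-triangle $\hat T_1$ with vertices $P_1$, $P_4(s)$, $P_6(r)$ degenerates to a sliver as, say, $r\to0$ with $s$ fixed, and the inverse constant blows up like $|T_1|^{-1/2}\gg h^{-1}$; likewise, the classical uniform bound $\gamma^2\le 3/4$ for the two-level P1 hierarchical splitting is proved for midpoint refinement and does not automatically extend to arbitrarily placed points $P_4,P_5,P_6$. Even a soft compactness argument over $(r,s)\in[0,1]^2$ is delicate, precisely because the limit configurations are degenerate. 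Closing this requires the explicit reference-patch computations, uniform in $(r,s)$ up to the degenerate limits, carried out in \cite{FreiRichter2014} for quadrilateral patches and in \cite{Frei2016}; without such a lemma (or an independent proof of your strengthened Cauchy--Schwarz inequality with $\gamma$ uniform in $r,s$), the decisive lower bound $\lambda_{\min}(\mathbf A)\ge c\,h^2$ is assumed rather than established.
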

\section{Numerical Results}
We implemented the method described in Section \ref{gangl:sec_method}, and tested it for the example where $\Omega = (0,1)^2$, $\Omega_1= B(0,1/2)$, $\Omega_2 = \Omega \setminus \overline \Omega_1$, $\kappa_1=1$, $\kappa_2=10$, and the right hand side as well as the Dirichlet data were chosen in such a way that the exact solution is 
known explicitly.
The optimal order of convergence stated in Theorem \ref{gangl:theoErrEst} can be observed in Table \ref{gangl:tableNumRes1}.
The interface method was also included in the shape optimization of an electric motor described in \cite{GanglLangerLaurainMeftahiSturm2015}. It can be seen from Fig. \ref{gangl:fig_motorOpt} that smoother and better designs can be achieved by locally modifying the mesh nodes.
\begin{table}
	\centering
	\label{gangl:tableNumRes1}
	\begin{tabular}{r  p{8mm}p{2cm}p{11mm}p{23mm}p{12mm}p{1cm}}
		\hline
		nVerts	& 		h			&  $\| u-u_h\|_{L^2}$	& rate $L_2$ 	 	&  $\| \nabla(u-u_h)\|_{L^2}$	& rate $H_1$ & angMax	 \\ 
		\hline
		289		&	$h_0$			& 	0.00724623		&		--	 			& 	0.175665					&	--	& 140.334 \\  
		1089	&	$h_0/2$			& 	0.00180955		&	\textbf{2.0016}	  	& 	0.087845				&	\textbf{0.9998}	  &138.116\\  
		4225	&	$h_0/4$			& 	0.000453133			&	\textbf{1.9976}		& 	0.0439104				&	\textbf{1.0004}	& 143.084\\  
		16641	&	$h_0/8$			& 	0.000113451			&	\textbf{1.9979}	 	& 	 0.0219536				&	\textbf{1.0001}	&152.223 \\  
		66049	&	$h_0/16$		& 	0.0000283643		&	\textbf{1.9999}		& 	0.0109756				&	\textbf{1.0002} &149.110	\\  
		263169	&	$h_0/32$		& 	0.00000709548	&	\textbf{1.9991}		& 	0.00548762				&	\textbf{1.0001}	 &155.643\\  
		\hline
	\end{tabular}
	\caption{Convergence history of interface problem \eqref{gangl:interfaceProblem} using mesh adaptation strategy}
\end{table}
\begin{figure}[ht]
	\centering
		\begin{tabular}{ccc}
			\includegraphics[scale=0.35, trim= 0mm 0mm 0mm 0mm, clip=true]{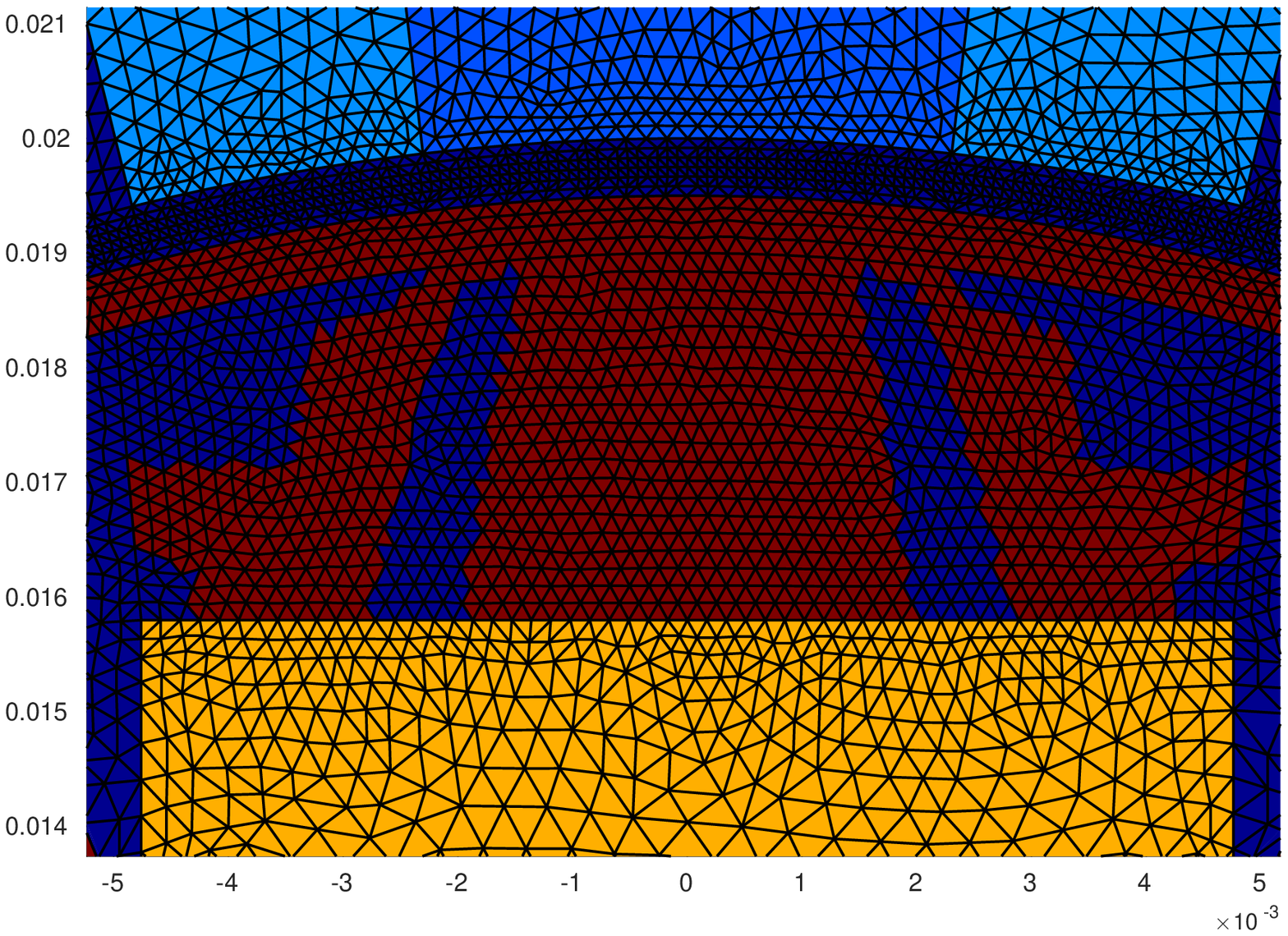} & \includegraphics[scale=0.19, trim= 0 00mm 0 0mm, clip=true]{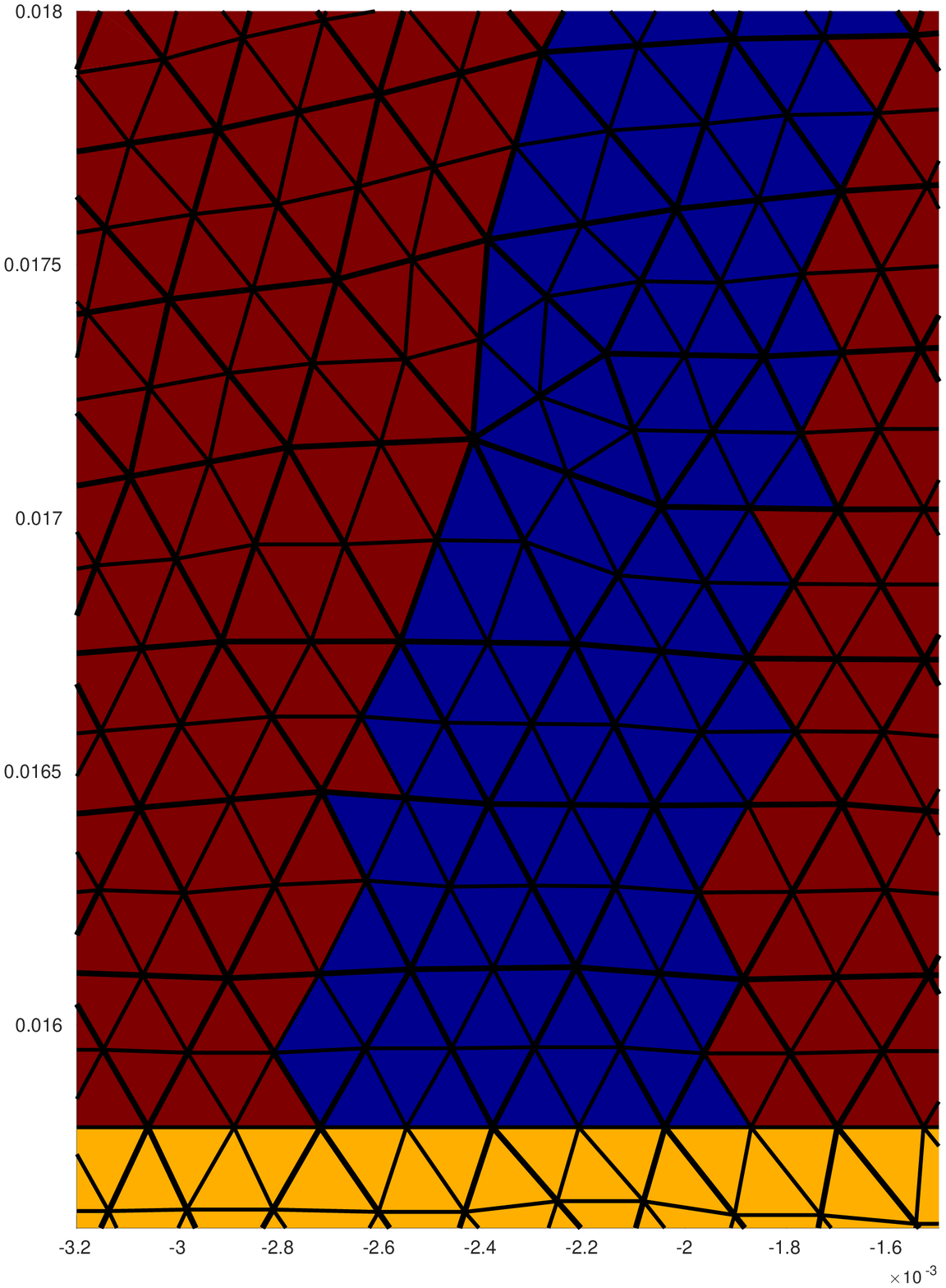} \\
			(a) & (b)\\
			\includegraphics[scale=0.35, trim= 0mm 0mm 0mm 0mm, clip=true]{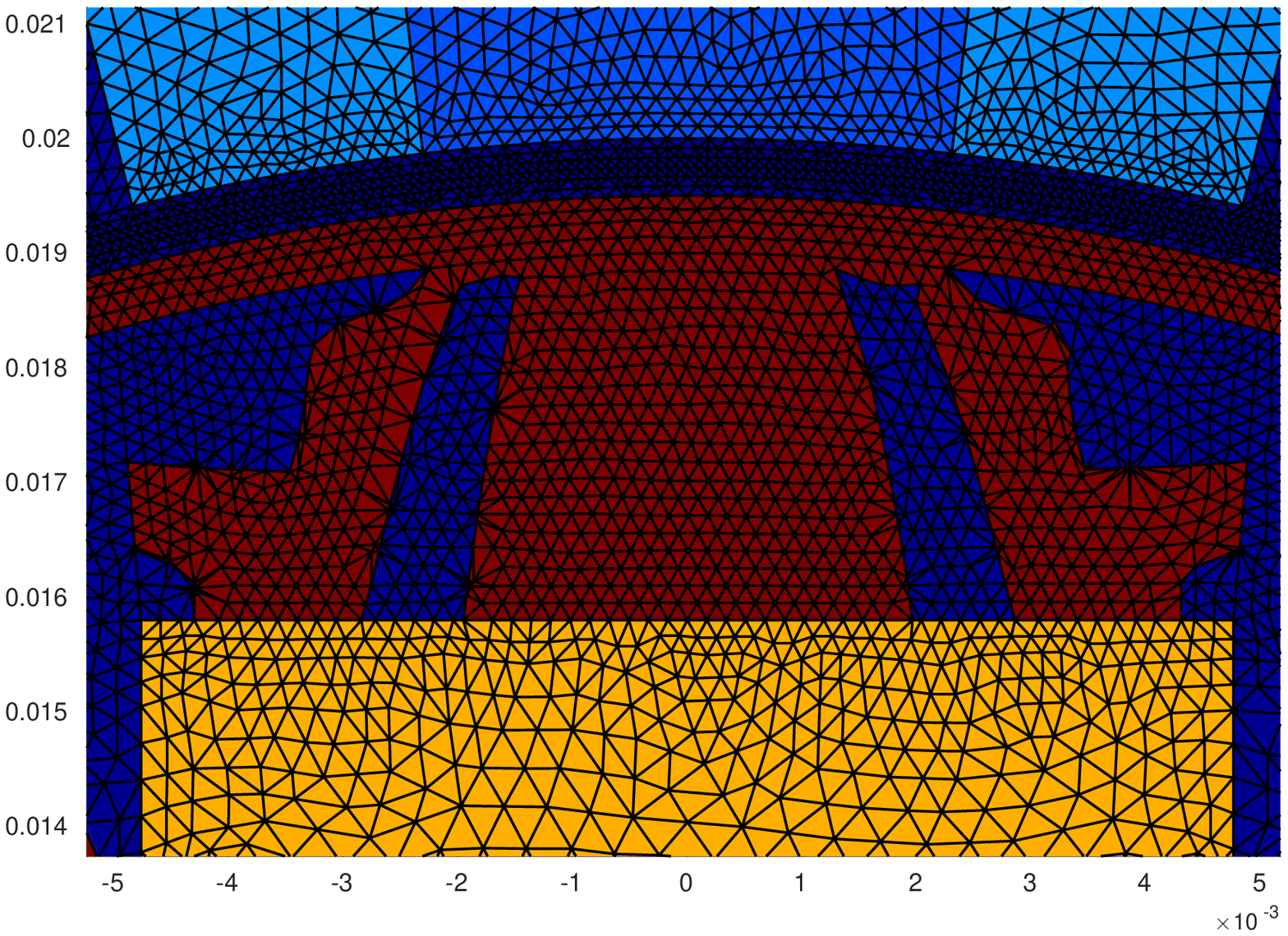} & \includegraphics[scale=0.19, trim= 0 0mm 0 0mm, clip=true]{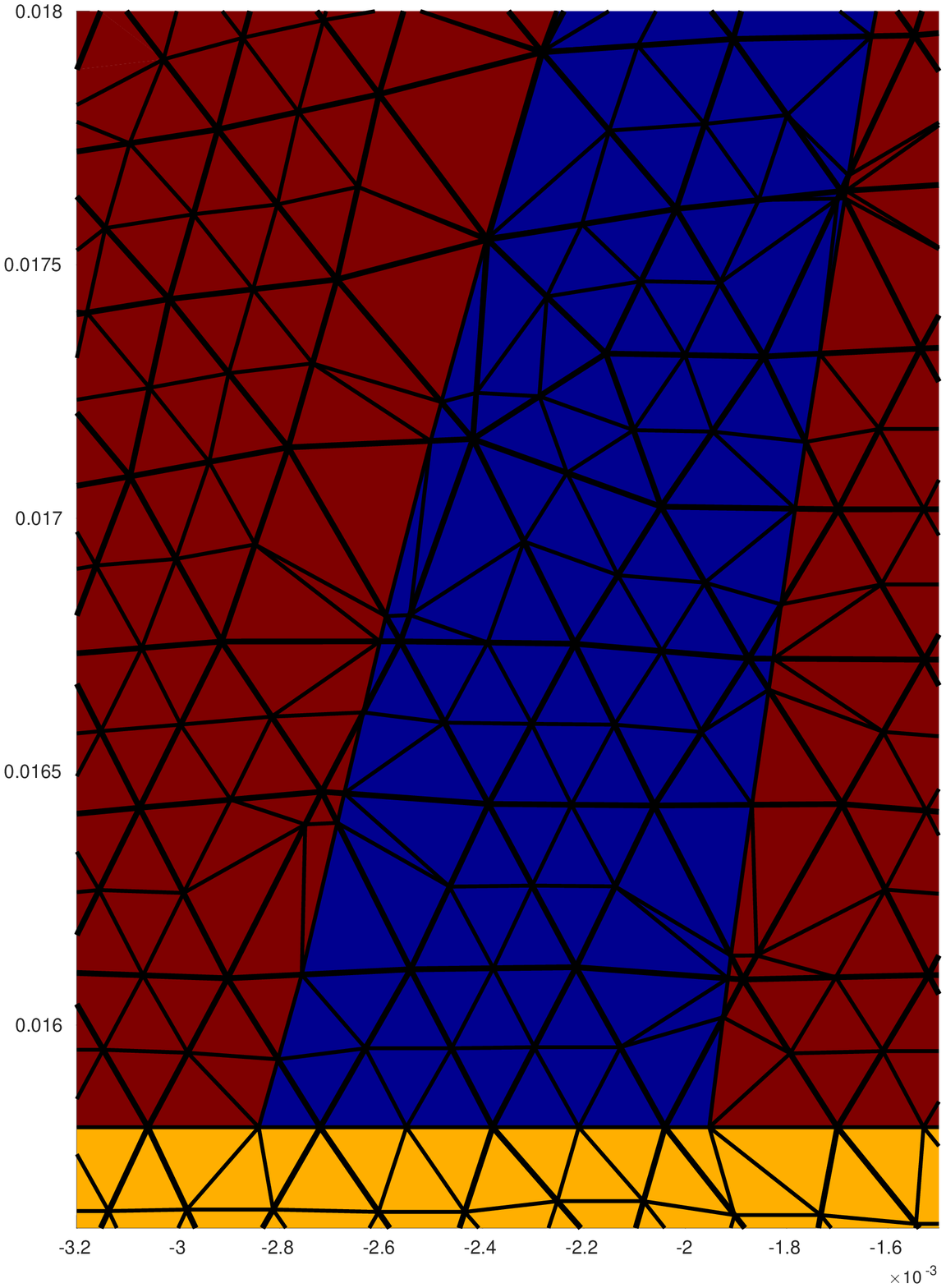}\\
			 (c) & (d)
			\end{tabular}
	\caption{(a) Final design of shape optimization without interface method, objective value $\mathcal J(u) \approx 0.0379$. (b) Zoom of (a). (c) Final design of shape optimization with interface method, objective value $\mathcal J(u) \approx  0.0373$. (d) Zoom of (c).}
 	\label{gangl:fig_motorOpt}
\end{figure}
\section{Conclusion}
We presented a local mesh modification strategy which allows to accurately resolve interfaces using the finite element method. We showed a maximal angle condition which ensures optimal order of convergence and presented numerical results for a model problem and for the shape optimization of an electric motor. 

\vspace{5mm}\textbf{Acknowledgement.}
The authors gratefully acknowledge the Austrian Science Fund (FWF) for the financial support of their work via the Doctoral Program DK W1214 (project DK4) on Computational Mathematics. They also thank the Linz Center of Mechatronics (LCM), which is a part of the COMET K2 program of the Austrian Government, for supporting their work 
on the optimization of electrical machines. 
%
%

%
\end{document}